\theoremstyle{thmstyleone}%
\theoremstyle{thmstyletwo}%
\theoremstyle{thmstylethree}%
\newtheorem*{theorem}{Theorem}
\begin{document}

%\title[]{An Easy Proof to Jordan Canonical Form}

\title[]{When Tangent Plane = Limit of Secant Plane}

\author%[1]
{\fnm{Zhibin} \sur{Yan}}\email{zbyan@hit.edu.cn}
\affil%[1]
{\orgdiv{School of Sciences}, \orgname{Harbin Institute of Technology
(Shenzhen)}, \orgaddress{\street{HIT Campus of University Town}, \city{Shenzhen}, \postcode{518055}, \state{Guangdong}, \country{China}}}

\abstract{For function of one variable, differentiability is equivalent to the existence of tangent line as the limit of secant line. The genuine counterpart of this equivalence for function of several variables is obtained for the first time.}

\keywords{differentiability, tangent line, total differentiability, tangent plane}

%%\pacs[JEL Classification]{D8, H51}

\pacs[MSC Classification]{26B05, 97I60}

\maketitle

Firstly, we give an example to illustrate that (total) differentiability (see [1. p. 212]) does not always imply the existence of limit of secant plane for function of two variables. This phenomenon is neither noticed seemingly nor understood well up to now. The meaning of ``limit of secant plane'' will become clear soon, in a sense naturally parallel to one variable case.

Let
$$
z=f(x,y)=x^{2}+y^{2}.
$$
In the $x$-$y$ coordinate plane, 
take points
$$
P=\begin{bmatrix}
          0 \\ 0
        \end{bmatrix},
A_{k}=\begin{bmatrix}
          \sin\frac{1}{k} \\ 0
        \end{bmatrix},
B_{k}=\begin{bmatrix}
          2\sin\frac{1}{k}\cos\frac{1}{k} \\ 2\sin^{2}\frac{1}{k}
        \end{bmatrix},
C_{k}=\begin{bmatrix}
          3\sin\frac{1}{k}\cos\frac{1}{k} \\ 3\sin^{2}\frac{1}{k}
        \end{bmatrix},
$$
where $k=1, 2,\ldots$. Then the equation of the secant plane of the graph of function $f(x, y)$ passing through the three points $(P, f(P)), (A_{k}, f(A_{k}))$ and $(B_{k}, f(B_{k}))$ is
$$
z=\left(\sin\frac{1}{k}\right)x+
\left(2-\cos\frac{1}{k}\right)y.
$$
Corresponding to the other three points $(P, f(P)), (A_{k}, f(A_{k}))$ and $(C_{k}, f(C_{k}))$, the equation is
$$
z=\left(\sin\frac{1}{k}\right)x+
\left(3-\cos\frac{1}{k}\right)y.
$$
When $k\rightarrow \infty$, $A_{k}, B_{k}$ and $C_{k}$ all tend to $P$, and the two secant planes tend to two different planes
$$
z=y
\ \textrm{  and  }\
z=2y
$$
respectively. However, the function is totally differentiable and the tangent plane at $(P, f(P))$ has equation
$$
z=0.
$$

Now we endeavor to understand this phenomenon. Let
$
z=f(x,y)
$
be defined on a neighborhood of point $
P=\begin{bmatrix}
          x_{0} & y_{0}
        \end{bmatrix}^{\textrm{T}}$.
To avoid the superficially complicated limit notations, we discretize the limit process. Take in the neighborhood points
$
A_{k}=\begin{bmatrix}
          x_{k} & y_{k}
        \end{bmatrix}^{\textrm{T}}
$ and
$
B_{k}=\begin{bmatrix}
          u_{k} & v_{k}
        \end{bmatrix}^{\textrm{T}}
$ different from $P$, $k=1, 2,\ldots$,
which satisfy
\begin{equation}\label{EqOne}
\lim_{k\rightarrow\ \infty}A_{k}
=\lim_{k\rightarrow\ \infty}B_{k}=P.
\end{equation}
The angle $\theta_{k}$ between vectors $A_{k}-P$ and $B_{k}-P$ can be calculated by inner product and lengths of vectors as
$$
\theta_{k}=\arccos\frac{(A_{k}-P)\cdot(B_{k}-P)}{|A_{k}-P|\times|B_{k}-P|}.
$$
Denote
$
z_{k}=f(x_{k},y_{k}), w_{k}=f(u_{k},v_{k})
$.
Recall that $f(x,y)$ is called (totally) differentiable at $P$, if there exists a matrix $J$ of size $1\times 2$ such that
\begin{equation}\label{EqTotalDiff}
\lim_{\left| \begin{bmatrix}
          \Delta x & \Delta y
        \end{bmatrix}^{\textrm{T}}\right|\rightarrow 0}\frac{\left|\Delta z-J\begin{bmatrix}
          \Delta x & \Delta y
        \end{bmatrix}^{\textrm{T}}\right|}{\left|\begin{bmatrix}
          \Delta x & \Delta y
        \end{bmatrix}^{\textrm{T}}\right|}=0,
\end{equation}
where
$
\Delta z =f(x_{0}+\Delta x,y_{0}+\Delta y)-f(x_{0},y_{0}).
$
The matrix $J$ is then called the total derivative (Jacobian matrix), distinguished from partial derivatives, two numbers.

\begin{theorem}\label{Theorem}
The following two statements are equivalent:
\begin{enumerate}
  \item Function $f(x,y)$ is totally differentiable at $P$.
  \item For arbitrary point sequences $\{A_{k}\}$ and $\{B_{k}\}$ satisfying (1) and
\begin{equation}\label{EqTheta}
\sin\theta_{k}\geq p, \ \ k=1, 2,\ldots
\end{equation}
where $0<p<1$, the limit (of matrix sequence)
\begin{equation}\label{EqKey}
\lim_{k\rightarrow\infty}
\begin{bmatrix}
z_{k}-z_{0} & w_{k}-z_{0} \\
\end{bmatrix}
\begin{bmatrix}
x_{k}-x_{0} & u_{k}-x_{0} \\
y_{k}-y_{0} & v_{k}-y_{0} \\
\end{bmatrix}^{-1}
\end{equation}
exists.
\end{enumerate}
If the statements hold, the limit (\ref{EqKey}) as an explicitly appeared matrix is the total derivative $J$ as an implicitly defined matrix in (\ref{EqTotalDiff}).
\end{theorem}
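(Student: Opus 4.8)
The plan is to read the matrix in (\ref{EqKey}) as the slope of the secant plane and recast the whole statement as a single algebraic identity. Writing $a_k = A_k - P$ and $b_k = B_k - P$ as columns, the matrix being inverted in (\ref{EqKey}) is $M_k = [\,a_k\ \ b_k\,]$, and its determinant is the scalar cross product, so that $|\det M_k| = |a_k|\,|b_k|\,\sin\theta_k$. Hence condition (\ref{EqTheta}) is exactly what guarantees $M_k$ is invertible and, more usefully, keeps $|\det M_k|$ comparable to $|a_k|\,|b_k|$. Denote by $J_k$ the matrix product in (\ref{EqKey}). For any fixed $1\times 2$ matrix $J$, set $r(h)=f(P+h)-f(P)-Jh$, so that differentiability with derivative $J$ means $r(h)/|h|\to 0$. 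The two interpolation conditions defining the secant plane read $[\,z_k-z_0\ \ w_k-z_0\,] = J M_k + [\,r(a_k)\ \ r(b_k)\,]$, which yields the central identity $J_k - J = [\,r(a_k)\ \ r(b_k)\,]\,M_k^{-1}$. Everything then reduces to controlling this right-hand side.

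For the implication (1)$\Rightarrow$(2) I would assume total differentiability with Jacobian $J$, so $r(h)/|h|\to 0$. Using the explicit adjugate form of $M_k^{-1}$, the bound $|\det M_k|\ge p\,|a_k|\,|b_k|$, and the elementary estimates $|v_k-y_0|\le|b_k|$, $|y_k-y_0|\le|a_k|$ and so on, each entry of $[\,r(a_k)\ \ r(b_k)\,]M_k^{-1}$ is dominated by $\tfrac1p\bigl(|r(a_k)|/|a_k| + |r(b_k)|/|b_k|\bigr)$, which tends to $0$ because $A_k,B_k\to P$. Thus $J_k\to J$, so the limit in (\ref{EqKey}) exists and equals the Jacobian. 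This simultaneously proves the forward direction and the final sentence identifying the limit with $J$.

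For the converse (2)$\Rightarrow$(1), I would first manufacture a candidate derivative: applying the hypothesis to the reference pair $A_k = P + \tfrac1k e_1$, $B_k = P + \tfrac1k e_2$ (for which $\theta_k\equiv\pi/2$) produces a limit matrix which I name $J$. It remains to prove $f$ is differentiable with this $J$. Suppose not; then there exist $\varepsilon>0$ and $h_k\to 0$, $h_k\ne 0$, with $|r(h_k)|\ge\varepsilon|h_k|$. The key construction is to take $A_k = P + h_k$ and $B_k = P + b_k$ with $b_k\perp h_k$ and $|b_k|=|h_k|$; then $M_k = |h_k|\,Q_k$ with $Q_k$ orthogonal, so $M_k^{-1}=|h_k|^{-1}Q_k^{\mathrm T}$. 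Since right multiplication by an orthogonal matrix preserves the Euclidean length of a row, the central identity gives $|J_k-J| = |h_k|^{-1}\sqrt{r(a_k)^2+r(b_k)^2}\ge |r(h_k)|/|h_k|\ge\varepsilon$, so this admissible pair keeps $J_k$ a fixed distance from $J$.

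The hard part is that this is not yet a contradiction: hypothesis (2) only asserts that each pair has \emph{a} limit, not that distinct pairs share one, so a priori the bad pair could converge to some $J'\ne J$. I would close this gap by interleaving: build a single sequence of pairs by alternating the reference pair with the bad pair. The angle bound survives (with, say, $p=\tfrac12$) and both subsequences still tend to $P$, so the interleaved pair is admissible; yet along it $J_k$ has one subsequence converging to $J$ and another staying at distance $\ge\varepsilon$ from $J$, hence $J_k$ does not converge, contradicting (2). This forces differentiability and closes the equivalence. I expect the two delicate points to be the norm identity in the orthogonal construction and the verification that the interleaved sequence is admissible; the remaining estimates are routine bookkeeping.
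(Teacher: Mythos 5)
Your proof is correct, and your forward direction coincides with the paper's: both rest on the identity $J_{k}-J=\begin{bmatrix} r(a_{k}) & r(b_{k})\end{bmatrix}M_{k}^{-1}$ together with the adjugate form of $M_{k}^{-1}$ and the determinant bound $|\det M_{k}|=|a_{k}|\,|b_{k}|\sin\theta_{k}\geq p\,|a_{k}|\,|b_{k}|$; the paper packages the same estimate by factoring $M_{k}^{-1}$ into a diagonal matrix of inverse lengths times the inverse of the column-normalized matrix, whose entries it bounds by $1/p$. The converse is where you genuinely diverge. The paper argues directly: it first asserts --- without proof --- that the limit in (\ref{EqKey}) is independent of the choice of sequences, names it $A$, and then, for an arbitrary $A_{k}\rightarrow P$, pairs it with the same $90^{\circ}$-rotated, equal-length $B_{k}$ you use, and multiplies $(J_{k}-A)$ on the right by $M_{k}/|A_{k}-P|$ (entries bounded by $1$) to conclude $\bigl(z_{k}-z_{0}-A(A_{k}-P)\bigr)/|A_{k}-P|\rightarrow 0$, i.e.\ differentiability via the sequential (Heine) criterion. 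You instead manufacture the candidate $J$ from one concrete reference pair, run a proof by contradiction with the quantitative lower bound $|J_{k}-J|\geq\varepsilon$ (exploiting that $M_{k}=|h_{k}|Q_{k}$ with $Q_{k}$ orthogonal, a cleaner norm identity than the paper's entrywise bound), and close with the interleaving trick. Notably, your interleaving step is precisely the justification that the paper's unproven ``the limit is unique'' assertion requires: statement 2 only grants each admissible pair \emph{a} limit, and merging two pairs (the angle condition surviving with the smaller of the two bounds) is what forces those limits to agree. So your route is slightly longer but self-contained, whereas the paper's direct route is shorter at the cost of leaving its uniqueness claim to the reader; your version also correctly identifies that the delicate points are exactly the admissibility of the interleaved sequence and the orthogonal norm identity, both of which you verify.
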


\begin{proof}
Statement 1 $\Rightarrow$ Statement 2.
Firstly, (\ref{EqTheta}) implies the existence of the inverse matrix in (\ref{EqKey}). Let $A$ be the known matrix defined in (\ref{EqTotalDiff}). We prove
\begin{eqnarray*}
% \nonumber to remove numbering (before each equation)
   & & \begin{bmatrix}
   z_{k}-z_{0} & w_{k}-z_{0} \\
   \end{bmatrix}
   \begin{bmatrix}
   x_{k}-x_{0} & u_{k}-x_{0} \\
   y_{k}-y_{0} & v_{k}-y_{0} \\
   \end{bmatrix}^{-1}
   -A \\
   &=& \left(
\begin{bmatrix}
   z_{k}-z_{0} & w_{k}-z_{0} \\
   \end{bmatrix}
   -A
   \begin{bmatrix}
   x_{k}-x_{0} & u_{k}-x_{0} \\
   y_{k}-y_{0} & v_{k}-y_{0} \\
   \end{bmatrix}
   \right)
   \begin{bmatrix}
   x_{k}-x_{0} & u_{k}-x_{0} \\
   y_{k}-y_{0} & v_{k}-y_{0} \\
   \end{bmatrix}^{-1}\\
  &=&
\begin{bmatrix}
   z_{k}-z_{0}-A
   \begin{bmatrix}
   x_{k}-x_{0} \\
   y_{k}-y_{0}
   \end{bmatrix} \ & \ w_{k}-z_{0}-A
   \begin{bmatrix}
   u_{k}-x_{0} \\
   v_{k}-y_{0}
   \end{bmatrix}
   \end{bmatrix}
   \begin{bmatrix}
   x_{k}-x_{0} & u_{k}-x_{0} \\
   y_{k}-y_{0} & v_{k}-y_{0} \\
   \end{bmatrix}^{-1}
\end{eqnarray*}
tends to zero matrix. We have
\begin{equation}\label{EqDiag}
 \begin{bmatrix}
   x_{k}-x_{0} & u_{k}-x_{0} \\
   y_{k}-y_{0} & v_{k}-y_{0}
\end{bmatrix}^{-1}
   = \begin{bmatrix}
   |A_{k}-P|^{-1} & 0 \\
   0 & |B_{k}-P|^{-1}
\end{bmatrix}
\begin{bmatrix}
   \frac{A_{k}-P}{|A_{k}-P|} & \frac{B_{k}-P}{|B_{k}-P|}
\end{bmatrix}^{-1}.
\end{equation}
Since
\begin{equation}\label{KeyInverse}
 \begin{bmatrix}
   \frac{A_{k}-P}{|A_{k}-P|} & \frac{B_{k}-P}{|B_{k}-P|}
\end{bmatrix}^{-1}
   =
   \frac{1}{\det\begin{bmatrix}
   \frac{A_{k}-P}{|A_{k}-P|} & \frac{B_{k}-P}{|B_{k}-P|}
\end{bmatrix}}
   \begin{bmatrix}
   \frac{v_{k}-y_{0}}{|B_{k}-P|} & \frac{x_{0}-u_{k}}{|B_{k}-P|} \\
   \frac{y_{0}-y_{k}}{|A_{k}-P|} & \frac{x_{k}-x_{0}}{|A_{k}-P|}
\end{bmatrix}
\end{equation}
and
$$
\left|\det\begin{bmatrix}
   \frac{A_{k}-P}{|A_{k}-P|} & \frac{B_{k}-P}{|B_{k}-P|}
\end{bmatrix}\right|
=\sin\theta_{k},
$$
every entry of the matrix (\ref{KeyInverse}) is bounded by $1/p$ from (3).
Then the result follows from and Eqs. (1), (2) and (\ref{EqDiag}).

Statement 2 $\Rightarrow$ Statement 1.
Firstly, the limit (\ref{EqKey}) is unique (does not depend on the choice of $\{A_{k}\}$ and $\{B_{k}\}$), and we denote it by $A$, i.e.,
\begin{equation}\label{EqKeyWithA}
A=\lim_{k\rightarrow\infty}
\begin{bmatrix}
z_{k}-z_{0} & w_{k}-z_{0} \\
\end{bmatrix}
\begin{bmatrix}
x_{k}-x_{0} & u_{k}-x_{0} \\
y_{k}-y_{0} & v_{k}-y_{0} \\
\end{bmatrix}^{-1}.
\end{equation}
Now for arbitrarily given $\{A_{k}\}$ satisfying $A_{k}\neq P$ and (1), we especially take $\{B_{k}\}$
such that
$$
u_{k}=x_{0}-(y_{k}-y_{0}), \ \ v_{k}=y_{0}+(x_{k}-x_{0}).
$$
Hence
$
B_{k}-P\perp A_{k}-P
$
and
$$ |B_{k}-P|=|A_{k}-P|=\sqrt{(x_{k}-x_{0})^{2}+(y_{k}-y_{0})^{2}}.
$$
Then $\theta_{k}=\pi/2$ and (3) is satisfied.
We only need to prove
\begin{eqnarray*}
% \nonumber to remove numbering (before each equation)
   & & \frac{\begin{bmatrix}
   z_{k}-z_{0} & w_{k}-z_{0} \\
   \end{bmatrix}
   -A
   \begin{bmatrix}
   A_{k}-P & B_{k}-P
   \end{bmatrix}}{|A_{k}-P|}
   \\
   &=& \left(
\begin{bmatrix}
   z_{k}-z_{0} & w_{k}-z_{0}
   \end{bmatrix}
   \begin{bmatrix}
   A_{k}-P & B_{k}-P
   \end{bmatrix}^{-1}-A
   \right)
   \frac{\begin{bmatrix}
   A_{k}-P & B_{k}-P
   \end{bmatrix}}{|A_{k}-P|}
\end{eqnarray*}
tends to zero matrix. This follows from (7) and the fact that the matrix
$$
\frac{\begin{bmatrix}
   A_{k}-P & B_{k}-P
   \end{bmatrix}}{|A_{k}-P|}
=
\frac{\begin{bmatrix}
   x_{k}-x_{0} & y_{0}-y_{k} \\
   y_{k}-y_{0} & x_{k}-x_{0} \\
\end{bmatrix}}
{\sqrt{(x_{k}-x_{0})^{2}+(y_{k}-y_{0})^{2}}}
$$
with its every entry bounded by $1$.
\end{proof}

%\begin{remark}
Denote
$$
\begin{bmatrix}
\alpha_{k} & \beta_{k}
\end{bmatrix}
=
\begin{bmatrix}
z_{k}-z_{0} & w_{k}-z_{0} \\
\end{bmatrix}
\begin{bmatrix}
x_{k}-x_{0} & u_{k}-x_{0} \\
y_{k}-y_{0} & v_{k}-y_{0} \\
\end{bmatrix}^{-1}.
$$
Then the secant plane passing through the three points $(P, f(P), (A_{k}, f(A_{k})$ and $(B_{k}, f(B_{k})$ has the equation
\begin{equation*}
  z=z_{0}+\alpha_{k}(x-x_{0})+\beta_{k}(y-y_{0}).
\end{equation*}
The theorem characterizes the total differentiability directly using the existence of tangent plane as the limit of secant plane. It perfectly parallels the one variable case, except the uniform linear independence condition (3). One such characterization is a longstanding dream in understanding and teaching multivariate calculus.
%\end{remark}
In following, we explain this concisely.

For function $f(x)$ of one variable, if the limit
\begin{equation}
\underset{\Delta x\rightarrow 0}{\lim }\frac{\Delta y}{\Delta x}=\underset{%
\Delta x\rightarrow 0}{\lim }\frac{f(a+\Delta x)-f(a)}{\Delta x}
\label{daoshu}
\end{equation}%
exists, then the function $f(x)$ is called to be derivable at $a$, and the
limit is called the derivative, denoted by $f^{\prime }(a)$. In this way, derivability is defined through an explicit condition on the function $f(x)$ itself; as contrast, the definition (\ref{EqTotalDiff}) of total derivability involves an implicit thing (matrix $J$) beside $f(x,y)$ itself.  

Derivability has the
celebrated interpretations: The slop of the secant line tends to the slop of
the tangent line; the average rate of change tends to the instantaneous rate
of change. Such a way of defining and interpreting derivative trivially applies
to partial derivative for several variables, but partial derivative is essentially derivative of function of one variable. For function of several variables, the genuine counterpart of
derivative is not partial derivative, but total derivative. Consider function $z=f(x,y)$ of two variables, take the fixed point $(x_{0},y_{0})$, and denote $\Delta x=x-x_{0}$, $\Delta y=y-y_{0}$ and $\Delta z=f(x,y)-f(x_{0},y_{0})$. A naive imitating of (%
\ref{daoshu}) demands for some kind of division operation
\begin{equation}
\frac{\Delta z}{\left[
\begin{array}{cc}
\Delta x & \Delta y%
\end{array}%
\right] }\text{ or }\frac{\Delta z}{\left[
\begin{array}{c}
\Delta x \\
\Delta y%
\end{array}%
\right] },  \label{division}
\end{equation}%
which, in the scope of mathematics up to now, has no meaning. 

Now Theorem \ref{Theorem} discovers a clear principle which solves the difficulty: The ``increment" (``quantity of change") of vector (as the mathematical expression of ``several variables") should be matrix
$$
\begin{bmatrix}
   x_{k}-x_{0} & u_{k}-x_{0} \\
   y_{k}-y_{0} & v_{k}-y_{0} \\
\end{bmatrix},
$$
no longer only being a vector
$$
\begin{bmatrix}
   x_{k}-x_{0} \\
   y_{k}-y_{0} \\
\end{bmatrix},
$$
or in geometrical language, the increment of vector should be parallelogram.
Then the algebraic difficulty involved in (\ref{division}) is naturally solved through reformulating it in terms of matrix operations in (\ref{EqKeyWithA}); the expected physical interpretation of ``rate of change" for total derivative survives in a new but natural way for function of several variables.

\bibliography{sn-bibliography}

%%===========================================================================================%%
%% If you are submitting to one of the Nature Portfolio journals, using the eJP submission   %%
%% system, please include the references within the manuscript file itself. You may do this  %%
%% by copying the reference list from your .bbl file, paste it into the main manuscript .tex %%
%% file, and delete the associated \verb+\bibliography+ commands.                            %%
%%===========================================================================================%%

%\bibliography{sn-bibliography}% common bib file
%% if required, the content of .bbl file can be included here once bbl is generated
%%\input sn-article.bbl

\end{document}